\newtheorem{theorem}{Theorem}
\newtheorem{lemma}{Lemma}
\theoremstyle{definition}
\newtheorem{definition}{Definition}
\newcommand{\C}[0]{\mathbb{C}}
\newcommand{\R}[0]{\mathbb{R}}
\newcommand{\Z}[0]{\mathbb{Z}}
\newcommand{\dif}[1]{\textnormal{d}#1}
\newcommand{\ie}[0]{\textnormal{i}}
\newcommand{\de}[0]{\mathrel{\mathop:}=}
\title[On Littlewood's proof of the prime number theorem]{On Littlewood's proof of the prime number theorem}
\author{Aleksander Simoni\v{c}}
\address{School of Science, The University of New South Wales (Canberra), ACT, Australia}
\email{a.simonic@student.adfa.edu.au}
\subjclass[2010]{11N05; 42A75}
\keywords{prime number theorem, almost periodic functions}
\date{}
\begin{document}

\begin{abstract}
In this note we examine Littlewood's proof of the prime number theorem. We show that this can be extended to provide an equivalence between the prime number theorem and the non-vanishing of Riemann's zeta-function on the one-line. Our approach goes through the theory of almost periodic functions and is self-contained.
\end{abstract}

\maketitle
\thispagestyle{empty}

\section{Introduction}

The prime number theorem (PNT) is considered one of the most important theorems in mathematics. It states that $\pi(x)\sim x/\log{x}$, where $\pi(x)$ counts prime numbers less than or equal to $x$, and is equivalent to $\psi(x)\sim x$, where $\psi(x)=\sum_{p^r\leq x}\log{p}$ is the Chebyshev function, see \cite[Section I.4]{InghamDistr}. 

Apart from the Selberg and Erd\H{o}s elementary approach to the PNT, the essential part in all known proofs consists of knowing the zero free region of the Riemann zeta-function $\zeta(s)$. Denote by $\rho=\beta+\ie\gamma$ the nontrivial zeros of $\zeta(s)$, where $0\leq\beta\leq1$ and $\gamma\in\R$. A not very well-known proof of the PNT was given by Littlewood in \cite{LittlewoodPNT} where he demonstrated that it is equivalent to 
\begin{equation}
\label{eq:main2}
\lim_{x\downarrow0} \sum_{\rho} x^{1-\rho}\Gamma(\rho) = 0.
\end{equation}
Here $\Gamma(z)$ is the gamma function. The series in \eqref{eq:main2} is uniformly convergent\footnote{This simply follows from the fact that $\Gamma(x+\ie y) \ll e^{-|y|}$, valid uniformly for $x\leq 2$ and $|y|\geq5$, and $N(T+1)-N(T)=O\left(\log{T}\right)$, where $N(T)$ is a number of those zeros $\rho$ with $0<\gamma\leq T$; see \cite[Theorem 9.2]{Titchmarsh}.} and this allows us to apply the limit inside the sum. Observe that $\beta\neq1$ implies \eqref{eq:main2} and consequently the PNT. The converse statement is well-known:

\begin{theorem}
\label{thm:pntconv}
The prime number theorem implies $\beta\neq1$.
\end{theorem}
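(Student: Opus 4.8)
Since this is the well-known direction, the plan is the classical contradiction argument relating $\psi$ to the logarithmic derivative of $\zeta$. Suppose, for contradiction, that $\zeta(1+\ie t_0)=0$ for some $t_0\in\R$. Because $\zeta(s)$ has a pole at $s=1$ we must have $t_0\neq0$, and the hypothetical zero has some finite order $m\geq1$. The bridge to the PNT is the classical identity
\[
-\frac{\zeta'(s)}{\zeta(s)}=s\int_1^{\infty}\psi(x)\,x^{-s-1}\,\dif x,\qquad \Re{s}>1,
\]
obtained by expanding $-\zeta'/\zeta$ as its Dirichlet series and applying partial summation, the boundary term vanishing thanks to the Chebyshev-type bound $\psi(x)=O(x)$ (which is in any case implied by the PNT).

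Assuming the PNT, I would write $\psi(x)=x+E(x)$ with $E(x)=o(x)$ and split the identity into
\[
-\frac{\zeta'(s)}{\zeta(s)}=\frac{s}{s-1}+F(s),\qquad F(s)\de s\int_1^{\infty}E(x)\,x^{-s-1}\,\dif x,\qquad \Re{s}>1.
\]
The crux is to show that $(\sigma-1)F(\sigma+\ie t_0)\to0$ as $\sigma\downarrow1$. For this I would fix $\varepsilon>0$, choose $X$ with $|E(x)|\leq\varepsilon x$ for $x\geq X$, and split the integral at $X$: the contribution of $[1,X]$ is bounded uniformly in $\sigma\geq1$ and hence is annihilated by the factor $\sigma-1$, while the contribution of $[X,\infty)$ is at most $\varepsilon\int_X^{\infty}x^{-\sigma}\,\dif x\leq\varepsilon/(\sigma-1)$, so that multiplying by $\sigma-1$ leaves a quantity bounded by $\varepsilon\,|\sigma+\ie t_0|$; letting $\sigma\downarrow1$ and then $\varepsilon\downarrow0$ gives the claim. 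Since $t_0\neq0$, the term $(\sigma-1)\,s/(s-1)$ also tends to $0$, and therefore
\[
\lim_{\sigma\downarrow1}(\sigma-1)\left(-\frac{\zeta'(\sigma+\ie t_0)}{\zeta(\sigma+\ie t_0)}\right)=0.
\]
On the other hand, a zero of order $m$ at $1+\ie t_0$ forces $-\zeta'(s)/\zeta(s)=-m/(s-1-\ie t_0)+O(1)$ near that point, so the same limit equals $-m\leq-1$. This contradiction shows that no such $t_0$ exists, that is, $\beta\neq1$.

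The only genuine work is the middle estimate, namely the uniform control of $F(\sigma+\ie t_0)$ as $\sigma\downarrow1$; the remainder is bookkeeping with the Dirichlet series and the local behaviour of $\zeta'/\zeta$ at a zero. One could equally phrase everything with $\log\zeta$ and the integral $\int_1^{\infty}(\psi(x)-x)x^{-s-1}\,\dif x$, but the logarithmic-derivative formulation makes the clash between the vanishing limit above and the simple pole produced by the hypothetical zero the most transparent.
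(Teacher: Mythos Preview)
Your argument is correct and is precisely the classical route the paper itself flags as ``the common proof'' via the identity~\eqref{eq:converse}. The paper, however, deliberately sets this aside and instead derives $\beta\neq1$ from Littlewood's equivalence~\eqref{eq:main2}. Assuming zeros on the $1$-line, the contribution of those zeros to $\sum_\rho x^{1-\rho}\Gamma(\rho)$ becomes, after the substitution $u=-\log x$, a generalised trigonometric series $F(u)=\sum_n \Gamma(1+\ie\gamma_n)e^{\ie\gamma_n u}$, while the zeros with $\beta<1$ contribute $o(1)$ as $x\downarrow0$. The PNT thus forces $F(u)\to0$ as $u\to\infty$. The paper shows that $F$ is almost periodic (Theorem~\ref{thm:prop}) and proves that an almost periodic function having a limit at infinity must be constant (Lemma~\ref{lem:main}); hence $F\equiv0$, and an averaged Fourier-coefficient computation then yields $\Gamma(1+\ie\gamma_n)=0$, a contradiction. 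Your approach is shorter and needs nothing beyond the Dirichlet series and the local Laurent expansion of $\zeta'/\zeta$; the paper's approach is the whole point of the note, since it closes the loop internally to Littlewood's framework and exhibits a clean application of Bohr--Bochner almost periodicity.
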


The common proof of Theorem~\ref{thm:pntconv} goes through the formula
\begin{equation}
\label{eq:converse}
-\frac{\zeta'}{\zeta}(s) - \frac{s}{s-1} = \int_{1}^{\infty} \frac{\psi(x)-x}{x^{s+1}}\dif{x}
\end{equation}
which is valid for $\Re\{s\}>1$; see \cite[p.~37]{InghamDistr}. While this idea is independent of any approach to the PNT, it is also very tempting to use \eqref{eq:main2} in the opposite direction and thus ``complete'' Littlewood's proof. 

The main purpose of this note is twofold: to sketch Littlewood's proof in hope to make it more popular, and to provide a proof of Theorem \ref{thm:pntconv} using identity \eqref{eq:main2}. Our approach goes through the theory of almost periodic functions (Definition \ref{def:apf}). We should mention that this idea is not new. In \cite[pp.~261--262]{KacPer} it was used to establish equivalence of the PNT and $\beta\neq 1$ for functions in the Selberg class. This proof is considerably more difficult than ours and use properties of almost periodic functions, e.~g., uniqueness theorem, which are not so trivial as it might appear. We show that it is possible to provide all necessary details in a concise way while avoiding the concept of the Fourier series of an almost periodic function, thus making this exposition accessible also to non-specialists. 

The outline of our proof is the following. Assume the existence of zeros $\rho=1+\ie\gamma$ and denote the ordinates of such zeros in the upper half-plane by $0<\gamma_1\leq\gamma_2\leq\cdots$. By symmetry, $\gamma_{-j}\de-\gamma_{j}$, $j>0$ are the other zeros. Let $\mathscr{S}\subseteq\Z\setminus\{0\}$ be the set of all indices of $\gamma$ (could be finite or infinite) and let $\left\{a_n\right\}_{n\in\mathscr{S}}$ be a sequence of complex numbers such that the corresponding series converges absolutely, and if $\gamma_i=\gamma_j$ then $a_i=a_j$. By the identity principle, the set $\left\{\gamma_n\right\}_{n\in\mathscr{S}}$ does not have accumulation points. Define the following function:
\begin{equation}
\label{eq:function}
F\left(x;a_n\right) \de \sum_{n\in\mathscr{S}} a_n e^{\ie\gamma_{n}x}.
\end{equation}
Then the PNT is equivalent to
\begin{equation}
\label{eq:limits}
\lim_{x\to\infty}F\left(x;\Gamma\left(1+\ie\gamma_n\right)\right)=0.
\end{equation}
In Section~\ref{sec:apf} we will show that $F(x;a_n)$ is an almost periodic function, and in Section~\ref{sec:proof} that $\lim_{x\to\infty}F(x;a_n)=0$ implies $F\equiv 0$, see Lemma \ref{lem:main}, and furthermore that this implies $a_n=0$ for every $n\in\mathscr{S}$. In view of \eqref{eq:limits} this would be a contradiction and the proof of Theorem~\ref{thm:pntconv} will be complete.

\section{Littlewood's proof of the PNT.}

%A more recent proof of the PNT was given by Littlewood in \cite{LittlewoodPNT} where he demonstrated that it is equivalent to
%\begin{equation}
%\label{eq:main2}
%\lim_{x\downarrow0} \sum_{\rho} x^{1-\rho}\Gamma(\rho) = 0
%\end{equation}
%where $\Gamma(z)$ is the gamma function. The series in \eqref{eq:main2} is uniformly convergent, which is an easy exercise if we know that $N(T+1)-N(T)=O\left(\log{T}\right)$, where $N(T)$ is a number of the nontrivial zeros (counted with multiplicity) with $0<\gamma\leq T$; see \cite[Theorem 9.2]{Titch}, and if we know that\footnote{This follows from $\lim_{|y|\to\infty}\left|\Gamma(x+\ie y)\right|e^{\pi|y|/2}|y|^{1/2-x}=\sqrt{2\pi}$; see \cite[Eq.~\textbf{8.328} 1]{TableIntegrals}.}
%\begin{equation}
%\label{eq:stirling}
%\Gamma(x+\ie y) \ll e^{-|y|},
%\end{equation}
%valid uniformly for $x\leq 2$ and $|y|\geq5$. If $\zeta(1+\ie t)\neq0$ for $t\in\R$, then each term in \eqref{eq:main2} goes to zero. Then \eqref{eq:main2}, and consequently the PNT, follows by the uniform convergence theorem.

Most proofs of the PNT consist of two main parts called ``Tauberian'' and ``analytical''. A Tauberian theorem deals with the question if it is possible to obtain a (partial) converse of an Abelian theorem. As an example, look at the following statement that for $a_n\geq0$
\[
   \lim_{x\downarrow0} x\sum_{n=1}^\infty a_ne^{-nx} = 1 \quad \Longleftrightarrow \quad \lim_{m\to\infty} \frac{1}{m}\sum_{n=1}^{m} a_n = 1
\]
holds. The left implication has the similar nature as the classical theorem due to Abel on a continuity of a power series at the point on a boundary of its convergence disc. The main strength lies with the right implication and this can be obtained from the celebrated Hardy--Littlewood theorem from 1914. Karamata found in 1930 a much simpler two-page proof which uses the Weierstrass approximation theorem as the only advanced tool, see \cite[pp.~226--229]{TitchFun}. If we take $a_n=\Lambda(n)$ where $\Lambda$ is the von Mangoldt function, %$\Lambda\left(p^k\right)=\log{p}$ for $k\in\mathbb{N}$ and zero otherwise
then the PNT is equivalent to
\begin{equation}
\label{eq:tauber}
\lim_{x\downarrow0} x\sum_{n=1}^\infty \Lambda(n)e^{-nx} = 1.
\end{equation}
We would like to mention here that one year earlier Ramanujan studied in his third letter to Hardy the following function:
\[
   \phi(x) \de \phi_1(x)-\phi_2(x) \de \sum_{n=1}^\infty \Lambda(n)e^{-nx} - \log{2}\sum_{n=1}^\infty 2^n e^{-2^nx}.
\]
He claimed without proof that $\lim_{x\downarrow0}x\phi_2(x)=1$ and $\lim_{x\downarrow0} x\phi(x)=0$, which would consequently imply \eqref{eq:tauber}. But Hardy used in \cite[p.~39]{Har} a clever argument to show that the first limit is not only wrong but it cannot even exist, thus implying that the second limit is also wrong. On the same page Hardy wrote: \emph{``I should like to say that ``rigour apart, he found the Hardy--Littlewood proof'', but I cannot''}. The interested reader may find in this treatise some other examples of incorrect claims in analytic number theory by Ramanujan.

The $\Lambda$-function for $\Re\{s\}>1$ satisfies the important relation $-\zeta'(s)/\zeta(s)=\sum_{n=1}^\infty \Lambda(n)n^{-s}$. The analytical part of the proof begins with this equation together with the Mellin integral (see \cite[Section 2.15]{Titchmarsh}) for $e^{-x}$ to get 
\[
   x\sum_{n=1}^\infty \Lambda(n)e^{-nx} = -\frac{1}{2\pi\ie}\lim_{t\to\infty} \int_{2-\ie t}^{2+\ie t} \Gamma(s)\frac{\zeta'}{\zeta}(s)x^{1-s}\dif{s},
\]
where $x>0$. Now the idea is to take a contour integral along the rectangle $\mathcal{R}_{T}$ with vertices $-1/2\pm\ie T$ and $2\pm\ie T$ where the horizontal segments do not pass through the zeros of the zeta function\footnote{Littlewood takes $-1$ instead of $-1/2$ in the contour, but this is not a good choice because the gamma function has a pole at $-1$. We choose $-1/2$ but any number in the interval $(-1,0)$ would suffice.}. By the calculus of residues we then have 
\[
   -\frac{1}{2\pi\ie}\int_{\mathcal{R}_T} \Gamma(s)\frac{\zeta'}{\zeta}(s)x^{1-s}\dif{s} = -\sum_{\left|\Im\{\rho\}\right|<T} x^{1-\rho}\Gamma(\rho)+1-x\frac{\zeta'}{\zeta}(0).
\]
The first part clearly comes from the zeros of $\zeta(s)$ within the contour, while the second and third parts come from simple poles of $\zeta'(s)/\zeta(s)$ and $\Gamma(s)$ at $1$ and $0$, respectively. We need a result which asserts that there is an increasing and unbounded sequence $\left\{T_j\right\}_{j=1}^\infty$ of positive numbers such that $\zeta'\left(\sigma\pm\ie T_j\right)/\zeta\left(\sigma\pm\ie T_j\right)= O\left(T_j\right)$, uniformly for $-1\leq\sigma\leq2$, and this could be deduced from an approximate formula for $\zeta'(s)/\zeta(s)$; see \cite[Theorem 9.6 (A)]{Titchmarsh}. We also need an estimate $\zeta'\left(-1/2+\ie t\right)/\zeta\left(-1/2+\ie t\right)\ll |t|+1$, which follows from the logarithmic derivative version of the functional equation for $\zeta(s)$. With these estimates in hand, we could show
\begin{flalign*}
   -\frac{1}{2\pi\ie}\int_{\mathcal{R}_{T_j}} \Gamma(s)&\frac{\zeta'}{\zeta}(s)x^{1-s}\dif{s} = -\frac{1}{2\pi\ie} \int_{2-\ie T_j}^{2+\ie T_j} \Gamma(s)\frac{\zeta'}{\zeta}(s)x^{1-s}\dif{s} \\
   &+ O\left(x\sqrt{x}+e^{-T_j}\left(\frac{x^2\sqrt{x}\left(1-\log{x}\right)-1}{2x\log{x}}T_j-x\sqrt{x}\right)\right).
\end{flalign*}
Taking $j\to\infty$ in the above formula, we obtain
\[
    x\sum_{n=1}^\infty \Lambda(n)e^{-nx} = -\sum_{\rho} x^{1-\rho}\Gamma(\rho) + 1-x\frac{\zeta'}{\zeta}(0) + O\left(x\sqrt{x}\right),
\]
which finally gives 
\begin{equation}
\label{eq:analytic}
\lim_{x\downarrow0} \left(x\sum_{n=1}^\infty \Lambda(n)e^{-nx} + \sum_{\rho} x^{1-\rho}\Gamma(\rho)\right) = 1.
\end{equation}
Equation \eqref{eq:analytic} was already announced in Hardy and Littlewood's influential paper \cite{HarLit}, but used for different purposes. It is clear now that combination of \eqref{eq:tauber} and \eqref{eq:analytic} produces~\eqref{eq:main2}.

\section{Almost periodic functions}
\label{sec:apf}

The theory of almost periodic functions was initiated by H.~Bohr in 1925 and turned out to be very useful in the study of differential equations and Fourier analysis, see \cite{Besicovitch,Cor}. The space of such functions has remarkable properties. It includes the space of periodic functions, is a vector space, and the limit of every uniformly convergent sequence of almost periodic functions is also almost periodic, see Theorem \ref{thm:prop} below. The following definition is due to Bochner.

\begin{definition}
\label{def:apf}
Let $f(x)$ be a continuous function on $\R$ with complex values. We say that $f(x)$ is \emph{(uniformly) almost periodic} if for every sequence $\{x_n\}_{n=1}^\infty\subset\R$ there exists a subsequence $\left\{x_{1,n}\right\}_{n=1}^\infty$ such that $f\left(x+x_{1,n}\right)$ converges uniformly.
\end{definition}

We denote the space of all such functions by $\mathfrak{P}$. While the next theorem is fundamental, the common proof is somewhat longer than our proof, see \cite[pp.~1--5]{Besicovitch}. The reason is that we use Bochner's definition instead of Bohr's original one.

\begin{theorem}
\label{thm:prop}
The following three properties hold:
\begin{enumerate}
\item Continuous periodic functions are almost periodic.
\item If $f_1,f_2\in\mathfrak{P}$ and $a_1,a_2\in\C$, then $a_1f_1+a_2f_2\in\mathfrak{P}$.
\item If $\left\{f_n\right\}_{n=1}^\infty\subset\mathfrak{P}$ and $f_n\to f$ uniformly, then $f\in\mathfrak{P}$.
\end{enumerate}
\end{theorem}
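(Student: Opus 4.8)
The plan is to prove the three properties directly from Bochner's definition, handling each one separately and in increasing order of difficulty.

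\medskip

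\textbf{Property (1).} Suppose $f$ is continuous and periodic with period $p>0$. Given any sequence $\{x_n\}\subset\R$, I would reduce modulo $p$: write $x_n = p k_n + r_n$ with $k_n\in\Z$ and $r_n\in[0,p)$. Since $[0,p]$ is compact, the sequence $\{r_n\}$ has a convergent subsequence $r_{n_j}\to r\in[0,p]$. By periodicity, $f(x+x_{n_j}) = f(x+r_{n_j})$, and since $f$ is continuous on $\R$ hence uniformly continuous on every compact set — but here I need uniform continuity on all of $\R$, which follows because a continuous periodic function is uniformly continuous on $\R$ (it is uniformly continuous on $[-p,2p]$ and periodicity propagates this). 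Uniform continuity then gives $f(x+r_{n_j})\to f(x+r)$ uniformly in $x$. So the subsequence works.

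\medskip

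\textbf{Property (2).} Let $f_1,f_2\in\mathfrak{P}$ and fix a sequence $\{x_n\}$. Apply the definition to $f_1$ to extract a subsequence $\{x_{n_k}\}$ along which $f_1(x+x_{n_k})$ converges uniformly. Then apply the definition to $f_2$ along this subsequence to extract a further subsequence $\{x_{n_{k_\ell}}\}$ along which $f_2(x+x_{n_{k_\ell}})$ converges uniformly; the sequence $f_1(x+x_{n_{k_\ell}})$ still converges uniformly (subsequence of a uniformly convergent sequence). Then $a_1 f_1(x+x_{n_{k_\ell}}) + a_2 f_2(x+x_{n_{k_\ell}})$ converges uniformly, since a linear combination of uniformly convergent sequences converges uniformly. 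Continuity of $a_1 f_1 + a_2 f_2$ is immediate.

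\medskip

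\textbf{Property (3).} This is the main obstacle and requires a diagonal argument together with an $\varepsilon/3$ estimate. Suppose $f_n\to f$ uniformly with each $f_n\in\mathfrak{P}$; first note $f$ is continuous as a uniform limit of continuous functions. Fix a sequence $\{x_m\}\subset\R$. Using almost periodicity of $f_1$, extract a subsequence of $\{x_m\}$ along which $f_1(x+\cdot)$ converges uniformly; from that, using $f_2$, extract a further subsequence along which $f_2(x+\cdot)$ converges uniformly; continue, and take the diagonal subsequence $\{y_m\}$, so that $f_n(x+y_m)$ converges uniformly in $x$ as $m\to\infty$, simultaneously for every fixed $n$. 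It remains to show $f(x+y_m)$ is uniformly Cauchy. Given $\varepsilon>0$, pick $n$ with $\sup_x|f(x)-f_n(x)|<\varepsilon/3$; then pick $M$ so that for $m,m'\ge M$ we have $\sup_x|f_n(x+y_m)-f_n(x+y_{m'})|<\varepsilon/3$. The triangle inequality
\[
|f(x+y_m)-f(x+y_{m'})| \le |f(x+y_m)-f_n(x+y_m)| + |f_n(x+y_m)-f_n(x+y_{m'})| + |f_n(x+y_{m'})-f(x+y_{m'})|
\]
bounds the left side by $\varepsilon$ uniformly in $x$. Hence $f(x+y_m)$ converges uniformly, so $f\in\mathfrak{P}$. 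The only subtlety worth flagging is the legitimacy of the diagonal extraction — it works because at each stage we are passing to a subsequence of the previous one, so the diagonal sequence is eventually a subsequence of the $n$-th sequence for every $n$, which is exactly what makes $f_n(x+y_m)$ converge for each fixed $n$.
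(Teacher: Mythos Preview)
Your proof is correct and follows essentially the same approach as the paper. The paper only spells out property~(1) in detail---via reduction modulo the period, compactness, and uniform continuity---and dismisses (2) as trivial and (3) as a straightforward application of Cantor's diagonal process; you have simply filled in those omitted details in exactly the way the paper intends.
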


\begin{proof}
We will provide a proof of the first property only. The second property is trivial, and a proof of the third property is straightforward if one exploits Cantor's diagonal process. 

Let $f$ be a continuous periodic function with period $\omega>0$. Take an arbitrary sequence $\{x_n\}_{n=1}^\infty\subset\R$ and define a sequence $\left\{x_n'\right\}_{n=1}^\infty$
by 
\[
x_n'\de\min\left\{x_n-k\omega\geq0\colon k\in\Z\right\}. 
\]
Then $f\left(x+x_n\right)=f\left(x+x_n'\right)$ and $\left\{x_n'\right\}_{n=1}^\infty\subset[0,\omega)$. It follows that there exists a subsequence $\left\{x_{1,n}\right\}_{n=1}^\infty$ of $\left\{x_n\right\}_{n=1}^\infty$ such that $\left\{x_{1,n}'\right\}_{n=1}^\infty$ converges to some $x_0\in[0,\omega]$. Because $f$ is continuous on a compact set $[0,2\omega]$, it is also uniformly continuous there. This means that for every $\varepsilon>0$ there exists $\delta>0$ such that $\left|x_{1,n}'-x_0\right|<\delta$ implies $\left|f\left(x+x_{1,n}'\right)-f\left(x+x_0\right)\right|<\varepsilon$ for every $x\in[0,\omega]$. But then for every $\varepsilon>0$ there exists $N$ such that $\left|f\left(x+x_{1,n}\right)-f\left(x+x_0\right)\right|<\varepsilon$ for every $n>N$ and $x\in\R$. Therefore, $f\left(x+x_{1,n}\right)\to f\left(x+x_0\right)$ uniformly and $f$ is thus almost periodic.
\end{proof}

\section{Proof of Theorem \ref{thm:pntconv}}
\label{sec:proof}

Like a periodic function, an almost periodic function has the property that the existence of its limit at infinity characterizes it completely.

\begin{lemma}
\label{lem:main}
Let $f(x)$ be an almost periodic function. Then $f(x)\equiv C\in\C$ if and only if $\lim_{x\to\infty}f(x)=C$.
\end{lemma}

\begin{proof}
If $f(x)\equiv C$, then $\lim_{x\to\infty}f(x)=C$. On the contrary, assume that $f(x)$ is not a constant function. Then there exist $x_0$ and $y_0$ such that $f\left(x_0\right)\neq f\left(y_0\right)$. Define $m\de\frac{1}{3}\left|f\left(x_0\right)-f\left(y_0\right)\right|>0$. Because $f\in\mathfrak{P}$, there exists a strictly increasing sequence $\{h_n\}_{n=1}^\infty$ with $h_n\to\infty$ such that $f\left(x+h_n\right)$ converges uniformly. This means that there exists $N$ such that $\left|f\left(x+h_N\right)-f\left(x+h_n\right)\right|<m$ for all $n>N$ and $x\in\R$. For $n>N$ define $x_n\de x_0+h_n-h_N$ and $y_n\de y_0+h_n-h_N$. Then $x_n\to\infty$ and $y_n\to\infty$. We have
\begin{flalign*}
3m &= \left|f\left(x_0\right)-f\left(y_0\right)\right| \\
   &\leq \left|f\left(x_0\right)-f\left(x_n\right)\right| + \left|f\left(x_n\right)-f\left(y_n\right)\right| + \left|f\left(y_n\right)-f\left(y_0\right)\right| \\
   &< 2m + \left|f\left(x_n\right)-f\left(y_n\right)\right|;
\end{flalign*}
therefore $\left|f\left(x_n\right)-f\left(y_n\right)\right|>m$. This means that $\lim_{x\to\infty}f(x)$ could not exist. So if $\lim_{x\to\infty}f(x)=C$, then $f$ must be a constant function and it is clear that this constant is $C$.
\end{proof}

\begin{proof}[Proof of Theorem \ref{thm:pntconv}]
All three properties in Theorem \ref{thm:prop} guarantee that $F\left(x;a_n\right)$ is an almost periodic function. If $\lim_{x\to\infty}F\left(x;a_n\right)=0$, then Lemma \ref{lem:main} implies that $F\left(x;a_n\right)=0$ for all $x\in\R$. Take some $n_0\in\mathscr{S}$ and let $\mathscr{S}_{0}$ be a subset of $\mathscr{S}$ such that $n\in\mathscr{S}_{0}$ if and only if $\gamma_n=\gamma_{n_0}$. We have
\[
   F\left(x;a_n\right)e^{-\ie\gamma_{n_0}x} = \left|\mathscr{S}_{0}\right|a_{n_0} + \sum_{n\in\mathscr{S}\setminus\mathscr{S}_{0}} a_ne^{\ie\left(\gamma_{n}-\gamma_{n_0}\right)x}.
\]
Because we can change the order of summation and integration, the above equation implies
\[
   I(X)\de\frac{1}{X}\int_{0}^{X} F\left(x;a_n\right)e^{-\ie\gamma_{n_0}x}\dif{x} = \left|\mathscr{S}_{0}\right|a_{n_0} + \frac{1}{\ie X}R(X) 
\]
where
\[
   R(X) \de \sum_{n\in\mathscr{S}\setminus\mathscr{S}_{0}} \frac{a_n}{\gamma_n-\gamma_{n_0}}\left(e^{\ie\left(\gamma_{n}-\gamma_{n_0}\right)X}-1\right).
\]
We also observe that $ \lim_{X\to\infty} I(X)=\left|\mathscr{S}_{0}\right|a_{n_0}$ since
\[
   \left|R(X)\right|\leq\frac{2}{d}\sum_{n\in\mathscr{S}}\left|a_n\right|<\infty
\]
where $d\de\min_{n\in\mathscr{S}\setminus\mathscr{S}_{0}}\left\{\left|\gamma_n-\gamma_{n_0}\right|\right\}>0$. But then $F\left(x;a_n\right)\equiv0$ implies $a_n=0$ for all $n\in\mathscr{S}$. Consequently, the limit \eqref{eq:limits} does not hold and the Riemann zeta function does not have zeros with real parts equal to one.
\end{proof}

Finally, we point out that it is possible to construct the theory of almost periodic functions through trigonometric polynomials $T(x)=\sum_{k=1}^n c_ke^{\ie\lambda_{k}x}$, where $c_k$ are complex numbers and $\lambda_k$ are real numbers, see \cite{Cor}. Then we could say that $f(x)$ is an almost periodic function when for every $\varepsilon>0$ there exists a trigonometric polynomial $T_{\varepsilon}(x)$ such that $\left|f(x)-T_{\varepsilon}(x)\right|<\varepsilon$ for every $x\in\R$. By this definition our function $F\left(x;a_n\right)$ is of course almost periodic, but the author could not find a similar argument as in the proof of Lemma~\ref{lem:main} by using this definition. It is equivalent to Bochner's, but the proof is somehow longer than the proof of Theorem~\ref{thm:prop}.

\subsection*{Acknowledgements}
The author is grateful to Tim Trudgian for helpful and constructive remarks on the manuscript.

\ifx\undefined\bysame
\newcommand{\bysame}{\leavevmode\hbox to3em{\hrulefill}\,}
\fi

\end{document}